\theoremstyle{plain}
\theoremstyle{definition}
\newcommand{\calP}{\mathcal {P}}
\newcommand{\calC}{\mathcal {C}}
\newcommand{\End}{\operatorname{End}}
\newcommand{\Hom}{\operatorname{Hom}}
\newcommand{\rk}{\operatorname{rank}}
\numberwithin{equation}{section}
\newtheorem{thm}[equation]{Theorem}
\newtheorem{cor}[equation]{Corollary}
\newtheorem{lem}[equation]{Lemma}
\newtheorem{defn}[equation]{Definition}
\newtheorem{ex}[equation]{Example}
\newcommand{\exref}[1]{Example~\ref{#1}}
\newcommand{\lemref}[1]{Lemma~\ref{#1}}
\newcommand{\thmref}[1]{Theorem~\ref{#1}}
\newcommand{\raro}{\rightarrow}
\newcommand{\dbf}{\itshape\bfseries}   
\newcommand{\Ker}{\operatorname{Ker}}
\newcommand{\tp}{\operatorname{type}}
\newcommand{\nriso}{\cong_{\text{\rm nr}}}  
\newcommand{\Z}{{\mathbb Z}}
\newcommand{\Q}{{\mathbb Q}}
\newcommand{\Tc}{\operatorname{T_{cr}}}
\newcommand{\R}{\operatorname{R}}
 \newcounter{noteno}
\begin{document}
\subjclass[2010]{20K15, 20K25}
\keywords{torsion-free abelian group of finite rank, direct decomposition,
  completely decomposable direct summand} 

\title[Main  Decomposition]{Completely decomposable direct
  summands of torsion-free abelian groups of finite rank}

\author {Adolf Mader}
\address {Department of Mathematics \\
University of Hawaii at Manoa \\
2565 McCarhty Mall, Honolulu, HI 96922, USA}
\email{adolf@math.hawaii.edu}
  
\author {Phill Schultz}
\address {School of Mathematics and Statistics\\
The University of Western Australia \\ Nedlands\\
 Australia,  6009}
\email {phill.schultz@uwa.edu.au}
 
\maketitle

\begin{abstract} Let $A$ be a finite rank torsion--free abelian group.
  Then there exist direct decompositions $A=B\oplus C$ where $B$ is
  completely decomposable and $C$ has no rank 1 direct summand. In
  such a decomposition $B$ is unique up to isomorphism and $C$ unique
  up to near--isomorphism.
\end{abstract} 

 \section{Introduction} 

 Torsion-free abelian groups of finite rank (tffr groups) are best
 thought of as additive subgroups of finite dimensional $\Q$--vector
 spaces. All ``groups'' in this article are torsion-free abelian
 groups of finite rank. The rank of a group $A$ is the dimension of
 the vector space $\Q A$ that $A$ generates. By reason of rank, such
 groups always have ``indecomposable decompositions'', meaning direct
 decompositions with indecomposable summands. Although
   as shown in \cite{Lady74}, a group has only finitely many
   non--isomorphic summands, its indecomposable decompositions can be
   highly non--unique, (see for example \cite[Section~90]{F2}), and a
 group may have such decompositions in which the number of summands or
 the ranks of the summands differ. A particularly striking result in
 this direction is due to A.L.S.  Corner \cite{Cr61}, \cite{Cr69N}.

 Let $P = (r_1, \ldots, r_t)$ be a partition of $n$, i.e., $r_i \geq
 1$ and $r_1 + \cdots + r_t = n$. Then {\dbf $G$ realizes $P$} if
 there is an indecomposable decomposition $G = G_1 \oplus \cdots
 \oplus G_t$ such that for all $i,\  r_i = \rk(G_i)$.

 {\bf Corner's Theorem.} {\em Given integers $n \geq k \geq 1$, there
   exists a group $G$ of rank $n$ such that $G$ realizes every
   partition of $n$ into $k$ parts $n = r_1 + \cdots + r_k$.}
   
 Corner's Theorem is related to two problems posed by Fuchs
 \cite[Problems 67 and 68]{F2}, namely
\begin{enumerate}
\item Given an integer $m$, find all sequences $n_1<\cdots<n_s$ for
  which there is a tffr group of rank $m$ having indecomposable
  decompositions into $n_1,\cdots, n_s$ summands,
\item Given partitions $r_1+\cdots + r_k=n=r_1'+\cdots +r'_\ell$ of a
  positive integer $n$, under what conditions does there exist a tffr
  group with indecomposable decompositions with summands of ranks
  $r_1,\dots,r_k$ and $r'_1,\dots r'_\ell$ respectively?
\end{enumerate}
The second problem of Fuchs was solved by Blagoveshchenskaya,
\cite[Theorem 13.1.19]{M00} for a restricted class $\calC$ of groups:
let $P$ and $Q$ be partitions of $n$. There is a group $G\in\calC$
realising $P$ and $Q$ if and only if the sum of the largest part of
each and the number of parts of the other does not exceed $n+1$.

More generally, one can pose the

 {\bf Question:} {\em Characterize the families $\calP$ of partitions
   of $n$ that can be realized by a tffr group.}

 Corner's Theorem shows that families of partitions of $n$ of fixed
 length $k$ can be realized. On the other hand, he comments that
   \begin{quotation}\dots it can be shown quite readily that an
     equation such as $1+1+2=1+3$ \dots 
     cannot be realized.
   \end{quotation} 

A more general question was settled by Lee Lady for almost completely
decomposable groups (defined below)
\cite[Corollary~7]{La74a}, \cite[Theorem~9.2.7]{M00}. A group $G$ is
{\dbf clipped} if it has no direct summands of rank $1$. Lady's ``Main
Decomposition Theorem'' says that every almost completely decomposable
group $G$ has a decomposition $G = G_{cd} \oplus G_{cl}$ where
$G_{cd}$ is completely decomposable, $G_{cl}$ is clipped, $G_{cd}$ is
unique up to isomorphism, and $G_{cl}$ is unique up to
near--isomorphism.  Near isomorphism is a weakening of isomorphism due
to Lady \cite{Lady75}. There are several equivalent definitions, see
for example \cite[Chapter~9]{M00}, the most useful one for us being
that a group $A$ is nearly isomorphic to $B$, denoted $A \nriso B$,
if there exists a group $K$ such that $A \oplus K \cong B \oplus K$.
 
It follows from this definition that near isomorphism is an
equivalence relation on the class of groups. Moreover rank and the
property of being clipped are invariants of near isomorphism classes.

An important result due to Arnold \cite[12.9]{Arnold82},
\cite[Theorem~12.2.5]{M00}, is that if $A \nriso A'$ and $A = X \oplus
Y$, then $A' = X' \oplus Y'$ with $X \nriso X'$ and $Y \nriso Y'$.
Conversely, if $X \nriso X'$ and $Y \nriso Y'$, then $X \oplus Y
\nriso X' \oplus Y'$.

Let $A$ be a group. We say that an indecomposable decomposition $A =
\bigoplus_{i\in[n]} A_i$ of $A$ is {\dbf unique up to near
  isomorphism} if whenever $A = \bigoplus_{j\in[m]} B_j$ is an
indecomposable decomposition of $A$, then $n=m$ and there is a
permutation $\sigma$ of $[n]$ such that $A_i \nriso B_{\sigma(i)}$ for
all $i\in[n]$.

By Arnold's Theorem, nearly isomorphic groups of rank $n$ realize the
same partitions of $n$.

Denote the partition $(m, 1,\dots, 1)$ where there are $k$ 1s, by
$(m,1^k)$. Since indecomposable groups are certainly clipped, if an
almost completely decomposable group of rank $n$ realizes partitions
$(m, 1^{n-m})$ and $(m', 1^{n-m' })$, then $m=m'$.
 
Our main result is the generalization of the Main Decomposition
Theorem to arbitrary torsion-free groups of finite rank (\thmref{main
  decomposition}) which then settles Corner's remark.
  
 It may be asked to describe the isomorphism classes of indecomposable
 groups of a given rank. Rank--$1$ groups are indecomposable and have
 been classified by means of types (\cite{Lev19}, \cite{F2}) and there
 are $2^{\aleph_0}$ isomorphism classes. It is also possible to
 describe the indecomposable almost completely decomposable groups of
 rank $2$ (see \cite[Section 12.3]{M00}) but in general this task must
 be accepted as being hopeless.

 A {\dbf completely decomposable} group is a direct sum of rank--$1$
 groups, and completely decomposable groups were classified in terms
 of cardinal invariants by Baer \cite[Section 86, page 113]{F2}.  In
 particular, their decompositions into rank--{1} summands are unique
 up to isomorphism.

{\dbf Almost completely decomposable groups} are finite
extensions of completely decomposable groups of finite rank. This
class of groups was introduced and first studied by Lee Lady
\cite{La74a}, see \cite{M00} for a comprehensive exposition. An almost
completely decomposable group $X$ contains special completely
decomposable subgroups, namely those of minimal index in $X$, the
{\dbf regulating subgroups of $X$}. Rolf Burkhardt \cite{Bt84} showed
that the intersection of all regulating subgroups is again a
completely decomposable subgroup of finite index in $X$. This group,
that is fully invariant in $X$, is the {\dbf regulator} $\R(X)$ of
$X$. 

Most published examples of groups with non--unique decompositions are
almost completely decomposable groups. It is also noteworthy that for
an almost completely decomposable group $X$ with non--unique
indecomposable decompositions the index $[X : \R(X)]$ is a composite
number. On the other hand if $[X : \R(X)]$ is the power of a prime
$p$, then Faticoni and Schultz proved that the indecomposable
decompositions of $X$ are unique up to near--isomorphism,
\cite{FaSchu96}, \cite[Corollary~10.4.6]{M00}.  The problem then
remains to determine the near--isomorphism classes of indecomposables.
For an almost completely decomposable group $X$, write $\R(X) =
\bigoplus_{\rho \in \Tc(X)} R_\rho$ with $\rho$--homogeneous
components $R_\rho$ and $R_\rho \neq 0$. Then $\Tc(X)$ is called the
{\dbf critical typeset of~$X$.}  The problem has been largely solved
when the critical typeset is an inverted forest in a number of papers
by Arnold--Mader--Mutzbauer--Solak (\cite{AMMS12}, \cite{AMMS13a},
\cite{AMMS13b}, \cite{AMMS14a}, \cite{AMMS14b}, \cite{AMMS15a},
\cite{AMMS15b}, \cite{AMMS16a}, \cite{AMMS16b}) using representations
of posets as a tool.

\section{Main Decomposition}

A {\dbf rank--$1$ group} is a group isomorphic with an additive
subgroup of $\Q$. A {\dbf type} is the isomorphism class of a
$\rk$--$1$ group. It is easy to see that every $\rk$--$1$ group is
isomorphic to a {\dbf rational group} by which we designate an
additive subgroup of $\Q$ that contains $1$. If $A$ is a $\rk$--$1$
group, then $\tp(A)$ denotes the type of $A$, i.e., the isomorphism
class containing $A$. Types are commonly denoted by $\sigma, \tau,
\ldots$. We will also use $\sigma, \tau, \ldots$ to mean a rational
group of type $\sigma, \tau, \ldots$. It will always be clear from the
context whether $\tau$ is a rational group or a type. The advantage is
that any completely decomposable group $A$ of finite rank $r$ can be
written as $A = \sigma v_1 \oplus \cdots \oplus \tau_r v_r$ with $v_i
\in A$ because $1 \in \tau_i$, and $\tp(\tau_i) = \tau_i$. In this
case $\{v_1, \ldots, v_r\}$ is called a {\dbf decomposition basis} of
$A$. 
 
A completely decomposable group is called {\dbf $\tau$--homogeneous}
if it is the direct sum of rank--$1$ groups of type $\tau$, and {\dbf
  homogeneous} if it is $\tau$--homogeneous for some type $\tau$. It
is known \cite[86.6]{F2} that pure subgroups of homogeneous completely
decomposable groups are direct summands.

\begin{defn} A  group $G$ is {\dbf $\tau$--clipped} if $G$ does
  not possess a rank--$1$ summand of type $\tau$.
\end{defn}

\begin{lem}\label{tau homogeneous case} Suppose that $G = D \oplus B = A
  \oplus C$ where $B$ and $C$ are $\tau$--clipped and $D, A$ are
  completely decomposable and $\tau$--homogeneous. Then $D \cong A$.
\end{lem}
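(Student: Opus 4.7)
The plan is to prove $\rank(D) = \rank(A)$, from which $D \cong A$ follows by Baer's classification of completely decomposable groups by cardinal invariants, since both groups are $\tau$-homogeneous completely decomposable of the same rank.

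To show $\rank(D) \leq \rank(A)$, I will argue that the projection $\pi_A \colon G \to A$ along $C$ restricts to an injection on $D$, equivalently that $D \cap C = 0$. The intersection $D \cap C$, being the intersection of two direct summands of $G$, is pure in $G$ and hence pure in $D$. As a pure subgroup of the $\tau$-homogeneous completely decomposable group $D$, the cited theorem of \cite[86.6]{F2} makes $D \cap C$ a direct summand of $D$, and a routine inspection of Baer invariants shows that it is itself $\tau$-homogeneous completely decomposable. If $D \cap C \neq 0$, it admits a rank-$1$ direct summand $\tau v$ of type $\tau$. Transitivity of purity places $\tau v$ pure in $D$, so by the cited theorem $\tau v$ is a direct summand of $D$, and therefore of $G = D \oplus B$. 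However, $\tau v$ lies in the summand $C$ of $G$, and an elementary argument---if $G = \tau v \oplus E$ and $\tau v \subseteq C$, then $C = \tau v \oplus (E \cap C)$---forces $\tau v$ to be a direct summand of $C$. This contradicts the $\tau$-clippedness of $C$. Hence $D \cap C = 0$ and $\rank(D) \leq \rank(A)$.

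By the symmetric argument, swapping the roles of $(D, B)$ and $(A, C)$, one obtains $A \cap B = 0$ and $\rank(A) \leq \rank(D)$; combining gives the desired rank equality. The decisive step is the transfer of a rank-$1$ summand of type $\tau$ from $D \cap C$ through $D$ and $G$ down to $C$, which is precisely where the $\tau$-homogeneous CD hypothesis on $D$ meshes with the $\tau$-clippedness hypothesis on $C$; once this transfer is in place, the rest is bookkeeping with ranks and Baer's theorem.
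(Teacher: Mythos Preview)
Your proof is correct and follows essentially the same route as the paper's: both show that the projection $G \to A$ is injective on $D$ by arguing that a nonzero element of $D \cap C$ would produce a pure rank--$1$ subgroup of type $\tau$ that is a summand of $D$, hence of $G$, and then (via the modular law) of $C$, contradicting $\tau$--clippedness. The paper's version is marginally more direct---it takes an arbitrary $0 \neq x \in D$ with $x\alpha = 0$ and works with $\langle x\rangle_*$ immediately, rather than first establishing that the full intersection $D \cap C$ is a summand of $D$---but the substance is the same.
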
 

\begin{proof} Let $\delta, \beta, \alpha, \gamma \in \End(G)$ be the 
  projections belonging to the given decompositions. Let $0 \neq x \in
  D$. Then $x = x\alpha + x\gamma$. Assume that $x \alpha = 0$. Then
  $\langle x \rangle_*$ is a pure rank--$1$ subgroup of $D$ and hence
  a summand of $D$ and of $G$. Also $\langle x\rangle_* \alpha = 0$
  which says the $\langle x\rangle_* \subseteq \Ker \alpha = C$. It
  follows that $\langle x \rangle_*$ is a rank--$1$ summand of $C$ of
  type $\tau$, contradicting the fact that $C$ is $\tau$--clipped.
  Hence $\alpha : D \rightarrow A$ is a monomorphism and therefore
  $\rk D \leq \rk A$. By symmetry $\rk A \leq \rk D$ and $D \cong A$
  as desired.
\end{proof}

The direct sum of $\tau$--clipped groups need not be $\tau$--clipped
as \exref{fail sum of tau clipped} shows.

\begin{ex}\label{fail sum of tau clipped} Let $p, q$ be different
  primes and let $\sigma, \tau$ be rational groups that are
  incomparable as types and such that neither $\frac{1}{p}$ nor
  $\frac{1}{q}$ is contained in either $\sigma$ or $\tau$. Let
  \[
  X_1 = (\sigma v_1 \oplus \tau v_2) + \Z \frac{1}{p} (v_1 + v_2)
  \text{ and }
  X_2 = (\sigma w_1 \oplus \tau w_2) + \Z \frac{1}{q} (w_1 + w_2).
  \]  
  It is easy to see that $\R(X_1) = \sigma v_1 \oplus \tau v_2$ and
  $\R(X_2) = \sigma w_1 \oplus \tau w_2$, and that $X_1$ and $X_2$ are
  indecomposable and, in particular, clipped. There exist integers
  $u_1, u_2$ such that $u_1 p + u_2 q = 1$. Now $\frac{1}{p} (v_1 +
  v_2) + \frac{1}{q} (w_1 + w_2) = \frac{1}{p q} \left((q v_1 + p w_1)
    + (q v_2 + p w_2)\right)$. Set $v_1' = q v_1 + p w_1$, $v_2' = q
  v_2 + p w_2$, $w_1' = -u_1 v_1 + u_2 w_1$, and $w_2' = -u_1 v_2 +
  u_2 w_2$. Then (change of decomposition basis) $\sigma v_1 \oplus
  \sigma w_1 = \sigma v_1' \oplus \sigma w_1'$ and $\tau v_2 \oplus
  \tau w_2 = \tau v_2' \oplus \tau w_2'$. Hence $X = \left(\sigma w_1'
    \oplus \tau w_2'\right) \oplus \left(\left(\sigma v_1' \oplus \tau
      v_2'\right) + \Z \frac{1}{p q} (v_1' + v_2')\right)$ so $X$ has
  rank--$1$ summands of type $\sigma$ and $\tau$.
\end{ex} 

However, \lemref{sum of tau clipped} settles positively a special case.

\begin{lem}\label{sum of tau clipped} Let $G = A \oplus B$ where $A =
  \bigoplus_{\rho \neq \tau} A_\rho$ is completely decomposable and
  $B$ is $\tau$--clipped. Then $G$ is $\tau$--clipped.
\end{lem}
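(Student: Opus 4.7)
My plan is to proceed by contradiction, via a projection argument in the spirit of \lemref{tau homogeneous case}. Suppose $G$ admits a rank-$1$ direct summand $T$ of type $\tau$, yielding a second decomposition $G = T\oplus H$. Let $\alpha,\beta\in\End(G)$ be the projections along the first decomposition (with images $A$ and $B$), and let $\pi\in\End(G)$ be the projection onto $T$ along $H$. Since $\pi$ restricted to $T$ is the identity and $\alpha+\beta=1_G$, restricting to $T$ and postcomposing with $\pi$ yields the key identity
\[
1_T = \pi|_A\circ\alpha|_T + \pi|_B\circ\beta|_T
\]
in the integral domain $\End(T)$.

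Next I would expand the first summand using $A=\bigoplus_{\rho\neq\tau}A_\rho$: writing $\alpha_\rho:G\to A_\rho$ for the component projections, we have $\pi|_A\circ\alpha|_T=\sum_{\rho\neq\tau}\pi|_{A_\rho}\circ\alpha_\rho|_T$. The crux is to show that each term $\pi|_{A_\rho}\circ\alpha_\rho|_T\in\End(T)$ vanishes. This follows from the type constraints on $\Hom$ between a rank-$1$ group and a homogeneous completely decomposable group: a nonzero map $T\to A_\rho$ requires $\tau\leq\rho$, while a nonzero map $A_\rho\to T$ requires $\rho\leq\tau$. Since $\rho\neq\tau$, these cannot both hold, so at least one of the two restrictions is zero and the composite vanishes.

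Consequently $\pi|_A\circ\alpha|_T=0$, and the displayed identity collapses to $\pi|_B\circ\beta|_T=1_T$. This exhibits $\beta|_T:T\to B$ as a split monomorphism with retraction $\pi|_B$, so $\beta(T)\cong T$ is a rank-$1$ direct summand of $B$ of type $\tau$, contradicting the $\tau$-clippedness of $B$. The main obstacle is the type analysis in the middle step; once it is in place, the rest is a direct bookkeeping exercise closely parallel to the proof of \lemref{tau homogeneous case}.
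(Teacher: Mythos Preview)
Your argument is correct. The type analysis in the middle step is sound: for a rank-$1$ group $T$ of type $\tau$ and a $\rho$-homogeneous completely decomposable group $A_\rho$, a nonzero map $T\to A_\rho$ forces $\tau\le\rho$ (the image is isomorphic to $T$ and sits in a pure rank-$1$ subgroup of $A_\rho$, which has type $\rho$), while a nonzero map $A_\rho\to T$ forces $\rho\le\tau$ (restrict to a rank-$1$ summand on which the map is nonzero). Since $\rho\ne\tau$, at least one factor of each composite vanishes, and the splitting $\pi|_B\circ\beta|_T=1_T$ then yields $B=\beta(T)\oplus\Ker(\pi|_B)$ with $\beta(T)\cong T$ of type $\tau$, the desired contradiction. (The remark that $\End(T)$ is an integral domain is true but not actually used.)

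Your route differs from the paper's. The paper first reduces by induction to the case $\rk A=1$, say $A=\sigma a$ with $\sigma\ne\tau$, and then runs two separate projection arguments to obtain $\tau\le\sigma$ and $\sigma\le\tau$; the second of these requires an auxiliary quotient argument ($G/\sigma a\cong\tau v\oplus C'\cong B$) to reach the $\tau$-clipped hypothesis on $B$. You bypass both the induction and the quotient step by treating all homogeneous components $A_\rho$ at once and using the split-monomorphism trick to land the offending summand directly inside $B$. Your approach is shorter and more conceptual; the paper's is more elementary in that it never needs to invoke the general type constraints on $\Hom$ between a rank-$1$ group and a homogeneous group, only comparisons between two rank-$1$ groups.
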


\begin{proof} We may assume that $\rk A = 1$. In fact, if $A
  = A_1 \oplus \cdots \oplus A_k$ where $\rk A_i = 1$, then $A_k
  \oplus B$ is $\tau$--clipped by the rank $1$ case, $A_2 \oplus
  \cdots \oplus A_k \oplus B$ is $\tau$--clipped by induction, and $A
  \oplus B$ is $\tau$--clipped by the rank $1$ case. 

  By way of contadiction assume that $G = \tau v \oplus C = \sigma a
  \oplus B$ with $\tau \not\cong \sigma$ (as rational groups or $\tau
  \neq \sigma$ as types). Let $\alpha : G \raro \sigma a \subseteq G$,
  $\beta : G \raro B \subseteq G$, $\delta : G \raro \tau v \subseteq
  G$, and $\gamma : G \raro C \subseteq G$ be the projections
  (considered endomorphisms of $G$) that come with the stated
  decompositions. 

\begin{enumerate}
\item We have $v = v\alpha + v\beta$ uniquely. Suppose $v \alpha = 0$.
  Then $(\tau v) \alpha = 0$ and the summand $\tau v$ is contained in
  $\Ker \alpha = B$. Then $\tau v$ is a summand of $B$ contradicting
  the fact that $B$ is $\tau$--clipped. So $\alpha : \tau v \raro
  \sigma a$ is a monomorphism and $\tau \leq \sigma$.
\item We have $a = a\delta + a\gamma$. Suppose that $a \delta = 0$.
  Then $(\sigma a) \delta = 0$ and the summand $\sigma a$ is contained
  in $\Ker \delta = C$. Hence $C = \sigma a \oplus C'$ for some $C'$
  and $G = \tau v \oplus \sigma a \oplus C' = \sigma a \oplus
  B$. Hence $\frac{G}{\sigma a} \cong \tau v \oplus C' \cong B$. This
  contradicts the fact that $B$ is $\tau$--clipped. So $\delta :
  \sigma a \raro \tau v$ is a monomorphism and hence $\sigma \leq
  \tau$. 
\item By (1) and (2) we get the contradiction $\sigma = \tau$, saying
  that $G = \sigma a \oplus B$ does not have a rank--$1$ summand of
  type $\tau$, and the special case is proved.
\end{enumerate} 
\end{proof} 

\begin{thm}\label{main decomposition} {\rm\bf (Main Decomposition.)}
  Let $G$ be a torsion-free group of finite rank. Then there are
  decompositions $G = A_0 \oplus A_1$ in which $A_0$ is completely
  decomposable and $A_1$ is clipped. 

  Suppose that $G = A_0 \oplus A_1 = B_0 \oplus B_1$
  where $A_0$ and $B_0$ are completely decomposable and $A_1$ and
  $B_1$ are clipped. Then $A_0 \cong B_0$ and consequently $A_1\nriso
  B_1$
  \end{thm}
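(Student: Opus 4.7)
For existence, I would argue by induction on $\rk G$. The base $\rk G = 0$ is trivial. For the inductive step, either $G$ has no rank--$1$ summand, in which case $G$ is already clipped and we take $A_0 = 0$, $A_1 = G$; or $G = \tau v \oplus G'$ for some rank--$1$ group $\tau v$. Since $\rk G' < \rk G$, the inductive hypothesis provides $G' = A_0' \oplus A_1'$ with $A_0'$ completely decomposable and $A_1'$ clipped, and then $G = (\tau v \oplus A_0') \oplus A_1'$ is a decomposition of the required form.

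For uniqueness of $A_0$ up to isomorphism, suppose $G = A_0 \oplus A_1 = B_0 \oplus B_1$ with $A_0, B_0$ completely decomposable and $A_1, B_1$ clipped. By Baer's classification, it suffices to show that $A_0$ and $B_0$ have isomorphic $\tau$--homogeneous components for every type $\tau$. Fix $\tau$ and write $A_0 = A_{0,\tau} \oplus A_0^{\neq\tau}$ and $B_0 = B_{0,\tau} \oplus B_0^{\neq\tau}$, where $A_{0,\tau}, B_{0,\tau}$ are the $\tau$--homogeneous components and $A_0^{\neq\tau}, B_0^{\neq\tau}$ are the direct sums of the homogeneous components of types different from $\tau$. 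Then
\[
G \;=\; A_{0,\tau} \oplus \bigl(A_0^{\neq\tau} \oplus A_1\bigr) \;=\; B_{0,\tau} \oplus \bigl(B_0^{\neq\tau} \oplus B_1\bigr).
\]
Now $A_1$ is clipped, hence in particular $\tau$--clipped, and $A_0^{\neq\tau}$ is a completely decomposable group none of whose rank--$1$ summands has type $\tau$; so \lemref{sum of tau clipped} gives that $A_0^{\neq\tau} \oplus A_1$ is $\tau$--clipped, and similarly $B_0^{\neq\tau} \oplus B_1$ is $\tau$--clipped. Applying \lemref{tau homogeneous case} to the two displayed decompositions yields $A_{0,\tau} \cong B_{0,\tau}$. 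Since $\tau$ was arbitrary, $A_0 \cong B_0$.

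For the near--isomorphism statement, once $A_0 \cong B_0$ is known we may transport a chosen isomorphism to obtain $A_0 \oplus A_1 = G = B_0 \oplus B_1 \cong A_0 \oplus B_1$, whence $A_1 \oplus A_0 \cong B_1 \oplus A_0$ and so $A_1 \nriso B_1$ directly by the definition of near isomorphism recorded in the introduction (take $K = A_0$).

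The substantive content is in the uniqueness step; the main obstacle there is exactly the failure illustrated in \exref{fail sum of tau clipped}, namely that $\tau$--clippedness is not preserved under arbitrary direct sums, which prevents a naive induction on the number of homogeneous components. The way around this is precisely the asymmetric hypothesis of \lemref{sum of tau clipped}, where only one summand is required to be $\tau$--clipped and the other is completely decomposable with no type $\tau$ summand; the plan is to set up the application of that lemma so that this hypothesis is satisfied, and then feed the result into \lemref{tau homogeneous case}. Everything else is bookkeeping.
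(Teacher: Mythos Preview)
Your proposal is correct and follows essentially the same approach as the paper: the uniqueness argument is identical (split off the $\tau$--homogeneous component, apply \lemref{sum of tau clipped} to see the complement is $\tau$--clipped, then invoke \lemref{tau homogeneous case}), and the near--isomorphism conclusion is derived the same way. The only cosmetic difference is existence, where the paper simply takes $A_0$ to be a completely decomposable summand of maximal rank rather than using your rank induction, but both arguments are one-liners.
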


  \begin{proof} Let $A_0$ be a completely decomposable summand of $G$
    of maximal rank. Then $G = A_0 \oplus A_1$ and $A_1$ is clipped.

    Let $A_0 = \bigoplus_{\rho} A_{\rho}$ and $B_0 =
    \bigoplus_{\rho} B_{\rho}$ be the homogeneous decompositions of
    the completely decomposable groups $A_0$ and $B_0$. By allowing
    $A_{\rho}$ and $B_{\rho}$ to be the zero group, we may assume that
    the summation index ranges over all types $\rho$.
  
  We consider $G = A_\tau \oplus \left(\bigoplus_{\rho \neq
      \tau} A_\rho \oplus A_1\right) = B_\tau \oplus
  \left(\bigoplus_{\rho \neq \tau} B_\rho + B_1\right)$. By \lemref{sum
    of tau clipped} $\bigoplus_{\rho \neq \tau} A_\rho \oplus A_1$ and
  $\bigoplus_{\rho \neq \tau} B_\rho + B_1$ are both $\tau$--clipped.
  Hence by \lemref{tau homogeneous case} we conclude that $A_\tau \cong
  B_\tau$. Here $\tau$ was an arbitrary type and the claim is
  clear. The fact that $A_1\nriso B_1$ follows from the
  isomorphism $A_0\oplus A_1\cong A_0\oplus B_1$. 
\end{proof} 

\begin{cor}\label{realizing hooks} Suppose that $G$ has rank
  $n$ and $G$ realizes the partitions  
 $(m, 1^{n-m})$ and $ (m',1^{n-m'})$ 
  Then $m=m'$.
\end{cor}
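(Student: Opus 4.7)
The plan is to reduce the corollary directly to the uniqueness statement in \thmref{main decomposition}. Given the decomposition realizing $(m,1^{n-m})$, I would write it as $G = G_1 \oplus G_2 \oplus \cdots \oplus G_{n-m+1}$ with $\rk G_1 = m$ and $\rk G_i = 1$ for $i \geq 2$. The key observation is that an indecomposable group of rank $\geq 2$ has no proper direct summands, hence no rank--$1$ summand, and so is clipped. Thus if $m \geq 2$, setting $A_0 = G_2 \oplus \cdots \oplus G_{n-m+1}$ and $A_1 = G_1$ gives a completely decomposable / clipped decomposition of $G$ with $\rk A_0 = n - m$. If instead $m = 1$, then every summand has rank $1$, so $G$ is completely decomposable; taking $A_0 = G$ and $A_1 = 0$ gives such a decomposition with $\rk A_0 = n$.

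Next I would perform the same analysis on a decomposition realizing $(m',1^{n-m'})$, producing $G = B_0 \oplus B_1$ with $B_0$ completely decomposable and $B_1$ clipped, with $\rk B_0 = n - m'$ if $m' \geq 2$ and $\rk B_0 = n$ if $m' = 1$. By \thmref{main decomposition}, $A_0 \cong B_0$, so in particular $\rk A_0 = \rk B_0$.

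The proof then concludes by a short case analysis. If $m, m' \geq 2$, then $n - m = n - m'$ gives $m = m'$ immediately. If $m = m' = 1$, there is nothing to prove. The mixed case $m = 1$ and $m' \geq 2$ is impossible, since it would force $n = n - m'$, hence $m' = 0$; the symmetric mixed case is ruled out identically. In every admissible case, $m = m'$.

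There is no real obstacle here: the entire content is the observation that an indecomposable group of rank $\geq 2$ is automatically clipped, which lets each given partition--decomposition be recast as one of the form required by the Main Decomposition Theorem. The small case split at $m = 1$ or $m' = 1$ simply records that a rank--$1$ summand should be absorbed into the completely decomposable part rather than the clipped part.
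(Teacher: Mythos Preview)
Your proof is correct and follows essentially the same route as the paper: observe that an indecomposable summand of rank $\geq 2$ is clipped, reinterpret each hook decomposition as a Main Decomposition, and invoke \thmref{main decomposition} to compare the completely decomposable parts. The only difference is that you treat the boundary cases $m=1$ or $m'=1$ explicitly, whereas the paper's one-line proof tacitly assumes $m,m'\geq 2$ (since a rank--$1$ indecomposable summand is not clipped); your case split is a harmless and arguably welcome clarification.
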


\begin{proof} The indecomposable summands of ranks $m$ and $m'$ are
  necessarily clipped. so by Theorem \ref{main decomposition}, the
  completely decomposable parts of the decompositions are isomorphic.
\end{proof}

In particular there is no group that realizes both $(1,1,2)$ and
$(1,3)$. 

We call a decomposition $G = G_{cd} \oplus G_{cl}$ with $G_{cd}$
completely decomposable and $G_{cl}$ clipped a {\dbf Main
  Decomposition of $G$}. 

\begin{cor}\label{cdsummands} Let $C$ be a completely decomposable
  direct summand of a group $G$.  Then $G$ has a Main Decomposition
  $G_{cd}\oplus G_{cl}$ in which $C$ is a direct summand of $G_{cd}$.
\end{cor}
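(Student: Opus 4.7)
The plan is to leverage the Main Decomposition Theorem, applied not to $G$ itself but to a complement of $C$ in $G$. Since $C$ is a direct summand of $G$, I would first fix a decomposition $G = C \oplus D$ for some subgroup $D$ of $G$. Because $D$ is itself a torsion-free group of finite rank, \thmref{main decomposition} applies to $D$ and gives a decomposition $D = D_{cd} \oplus D_{cl}$ with $D_{cd}$ completely decomposable and $D_{cl}$ clipped.

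Next, I would assemble a candidate Main Decomposition of $G$ by setting $G_{cd} := C \oplus D_{cd}$ and $G_{cl} := D_{cl}$, so that $G = G_{cd} \oplus G_{cl}$. The two things to verify are that $G_{cd}$ is completely decomposable and that $G_{cl}$ is clipped. The first is immediate because $C$ and $D_{cd}$ are each completely decomposable and a direct sum of completely decomposable groups is completely decomposable. The second is automatic since $G_{cl} = D_{cl}$ was produced as the clipped part of a Main Decomposition of $D$. By construction $C$ is visibly a direct summand of $G_{cd}$, which is what the corollary demands.

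There is essentially no main obstacle here; the statement is a straightforward corollary of the existence half of \thmref{main decomposition}. The only point worth a moment's attention is that one must verify the resulting decomposition really satisfies the \emph{definition} of a Main Decomposition (i.e., completely decomposable summand plus clipped summand), rather than merely producing some decomposition with $C$ inside it. The uniqueness part of \thmref{main decomposition} is not needed for existence, but it guarantees retroactively that the completely decomposable summand $G_{cd}$ constructed this way is, up to isomorphism, the same as in any other Main Decomposition of $G$, so the conclusion is consistent with the structure theory already established.
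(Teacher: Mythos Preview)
Your proposal is correct and follows essentially the same route as the paper: write $G = C \oplus B$, take a Main Decomposition $B = B_{cd} \oplus B_{cl}$, and observe that $G = (C \oplus B_{cd}) \oplus B_{cl}$ is a Main Decomposition of $G$ with $C$ a summand of the completely decomposable part. Your commentary about which halves of \thmref{main decomposition} are actually needed is accurate but not required for the argument.
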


\begin{proof} Let $G = C \oplus B$ and let $B$ have Main Decomposition
  $B=B_{cd}\oplus B_{cl}$. Then $G=(C\oplus B_{cd})\oplus B_{cl}$ is a
  Main Decomposition of $G$.
\end{proof}

Main Decompositions are unique only up to near isomorphism. For
example, let $X = \tau v \oplus \left((\tau v_1 \oplus \sigma v_2) +
  \Z\frac{1}{5} (v_1 \oplus v_2)\right)$. The group $(\tau v_1 \oplus
\sigma v_2) + \Z\frac{1}{5} (v_1 \oplus v_2)$ is indecomposable, hence
clipped. We also have $X = \tau(v+v_1) \oplus \left((\tau v_1 \oplus
  \sigma v_2) + \Z\frac{1}{5} (v_1 \oplus v_2)\right)$ and $\tau v
\neq \tau (v+v_1)$. On the other hand if $G = G_{cd} \oplus G_{cl}$
and $\Hom(G_{cd}, G_{cl}) = 0$, then $G_{cd}$ is unique and direct
complements of $G_{cd}$ are isomorphic (\cite[Lemma~1.1.3]{M00}).


\end{document}